\def\subsection{\@startsection{subsection}{2}%
	\z@{.5\linespacing\@plus.7\linespacing}{.3\linespacing}%
	{\normalfont\bfseries}}
\theoremstyle{theorem}
\newtheorem{theorem}{Theorem}
\newtheorem{lemma}[theorem]{Lemma}
\theoremstyle{definition}
\theoremstyle{remark} \theoremstyle{question} \theoremstyle{example}
\newcommand{\N}{\mathbb{N}}   
\newcommand{\R}{\mathbb{R}}   
\newcommand{\U}{\mathbb{U}}
\newcommand{\V}{\mathbb{V}}
\newcommand{\W}{\mathbb{W}}
\newcommand{\cB}{\mathcal{B}}
\newcommand{\cK}{\mathcal{K}}
\newcommand{\cM}{\mathcal{M}}
\newcommand{\cU}{\mathcal{U}}
\newcommand{\eps}{\varepsilon}   
\newcommand{\ov} {\overline}     
\newcommand{\wt} {\widetilde}    
\newcommand{\diam}  {\operatorname{diam}}
\newcommand{\cantor}    {{\{0,1\}^\N}}
\begin{document}


\title[Uniformly Positive Entropy of Induced Transformations]
      {Uniformly Positive Entropy\\ of Induced Transformations}



\author{Nilson C. Bernardes Jr.}
\address{Departamento de Matem\'atica Aplicada, Instituto de Matem\'atica,
         Universidade Fede\-ral do Rio de Janeiro, Caixa Postal 68530,
         Rio de Janeiro, RJ, 21945-970, Brazil.}
\curraddr{}
\email{ncbernardesjr@gmail.com}
\thanks{}


\author{Udayan B. Darji}
\address{Department of Mathematics, University of Louisville, Louisville,
        KY 40208-2772, USA.}
\curraddr{}
\email{ubdarj01@louisville.edu}
\thanks{}


\author{R\^omulo M. Vermersch}
\address{Departamento de Matem\'atica, Centro de Ci\^encias F\'isicas e
         Matem\'aticas, Universidade Federal de Santa Catarina,
         Florian\'opolis, SC, 88040-900, Brazil.}
\curraddr{}
\email{romulo.vermersch@gmail.com}
\thanks{}


\subjclass[2010]{Primary 37B40; Secondary 28A33, 60B10, 54B20.}
\keywords{Continuous surjective maps, probability measures, Prohorov metric,
          weak$^*$ topology, hyperspaces, dynamics.}

\date{}

\dedicatory{}

\maketitle
	
\begin{abstract}
Let $(X,T)$ be a topological dynamical system consisting of a compact
metric space $X$ and a continuous surjective map $T : X \to X$.
By using local entropy theory, we prove that $(X,T)$ has uniformly positive
entropy if and only if so does the induced system $(\cM(X),\wt{T})$ on the
space of Borel probability measures endowed with the weak$^*$ topology. This
result can be seen as a version for the notion of uniformly positive entropy
of the corresponding result for topological entropy due to Glasner and Weiss.
\end{abstract}


\section{Introduction}\label{Intro}
	
Local entropy theory is a culmination of deep results in dynamics, ergodic
theory and combinatorics. Given a dynamical system with positive entropy,
it gives, in some sense, the location of where the entropy resides.
We refer the reader to the survey article by Glasner and Ye \cite{EGlaYe09}
for general information on the subject as well as important contributions
by many to the field. It is a powerful tool that can be applied in a variety
of settings. For example, the second author and Kato \cite{darjikato}, using
local entropy theory, settled some old problems concerning indecomposable
continua in dynamical systems. In this note we further show the power of
this theory by giving a simple proof of the theorem stated in the abstract.

By a {\em topological dynamical system} (TDS) we mean a pair $(X,T)$
consisting of a compact metric space $X$ with metric $d$ and a continuous
surjective map $T : X \to X$. Such a TDS induces, in a natural way, the TDSs
$$
(\cK(X),\ov{T}) \ \ \ \text{ and } \ \ \ (\cM(X),\wt{T}).
$$
In the first of these systems, $\cK(X)$ denotes the {\em hyperspace} of all
nonempty closed subsets of $X$ endowed with the {\em Hausdorff metric}
$$
d_H(K_1,K_2):= \inf\{\delta > 0 : K_1 \subset K_2^\delta \text{ and }
                                  K_2 \subset K_1^\delta\},
$$
where $A^\delta:= \{x \in X : d(x,A) < \delta\}$ is the
{\em $\delta$-neighborhood} of $A \subset X$, and
$\ov{T} : \cK(X) \to \cK(X)$ is the continuous surjective map given by
$$
\ov{T}(K):= T(K) \ \ \ (K \in \cK(X)).
$$
In the second one, $\cM(X)$ denotes the space of all Borel probability
measures on $X$ endowed with the {\em Prohorov metric}
$$
d_P(\mu,\nu):= \inf\{\delta > 0 : \mu(A) \leq \nu(A^\delta) + \delta
   \text{ and }  \nu(A) \leq \mu(A^\delta) + \delta \text{ for all } A \in \cB_X\},
$$
where $\cB_X$ is the $\sigma$-algebra of all Borel subsets of $X$, and
$\wt{T} : \cM(X) \to \cM(X)$ is the continuous surjective map given by
$$
(\wt{T}(\mu))(A):= \mu(T^{-1}(A)) \ \ \ (\mu \in \cM(X), A \in \cB_X).
$$
It is well known that $\cK(X)$ and $\cM(X)$ are compact metric spaces and
that $\ov{T}$ and $\wt{T}$ are homeomorphisms whenever $T$ is a 
homeomorphism. Moreover,
$$
d_P(\mu,\nu):= \inf\{\delta > 0 : \mu(A) \leq \nu(A^\delta) + \delta
   \text{ for all } A \in \cB_X\}
$$
(see p.\ 72 in \cite{PBil99}). We refer the reader to the books \cite{Nadler,AKec95} 
and \cite{PBil99,RDud02,AKec95} for a study of the spaces $\cK(X)$ and $\cM(X)$,
respectively.
	
An interesting line of investigation in the area of dynamical systems
is the study of the relations between the dynamics of the TDS $(X,T)$ and
the dynamics of the induced TDSs $(\cK(X),\ov{T})$ and $(\cM(X),\wt{T})$.
Such investigations were initiated by Bauer and Sigmund \cite{WBauKSig75},
and later were widely developed by several authors; see
\cite{AkiAusNag17,BerPerRod17,NBerRVer14,NBerRVer16,LFerCGoo16,EGlaBWei95,
GKLOP09,LiOprWu17,LiYanYe15,KSig78}, for instance.
It is worth to mention that there is a common line in these works: the search
for which dynamical properties the base system $(X,T)$ and the induced
systems $(\cK(X),\ov{T})$ and $(\cM(X),\wt{T})$ share with each other
and which dynamical properties they do not share at all.

It is well known that the extension of a TDS to the hyperspace can
dramatically increase the system's complexity. For instance, an example
was given in \cite{EGlaBWei93} (see also \cite{DKwiPOpr07}) of a
zero topologial entropy system $(X,T)$ whose hyperspace extension
$(\cK(X),\ov{T})$ has positive topological entropy. As another example,
it was proved in \cite{NBerUDar12,NBerRVer14} that for the generic
homeomorphism $h$ of the Cantor space $\cantor$, the TDS $(\cantor,h)$
has no Li-Yorke pair (in particular, it has zero topological entropy),
but its hyperspace extension $(\cK(\cantor),\ov{h})$ is uniformly
distributionally chaotic and has infinite topological entropy.

The situation for the extension $(\cM(X),\wt{T})$ is completely different,
at least from the point of view of topological entropy. Indeed, a deep and
surprising result due to Glasner and Weiss \cite{EGlaBWei95} asserts that
if $(X,T)$ has zero topological entropy, then so does $(\cM(X),\wt{T})$.
A corresponding result for the notion of null system was obtained by
Kerr and Li \cite{KerrLi05}: if $(X,T)$ is null, then so is $(\cM(X),\wt{T})$.
Recall that $(X,T)$ is said to be {\em null} if the topological sequence
entropy of $(X,T)$ is zero for any increasing sequence of natural numbers.
Recently, Qiao and Zhou \cite{QiaoZhou17} obtained such a result for the
notion of sequence entropy, which unified the above-mentioned results of
Glasner-Weiss and Kerr-Li. We observe that the converses of these results
are trivially true, since $(X,T)$ can be regarded as a subsystem of
$(\cM(X),\wt{T})$.

Our goal is to investigate the relationships between these
systems for the notion of {\it uniformly positive entropy} (UPE).
This notion was introduced by Blanchard \cite{Blan92} as a candidate for
an analogue in topological dynamics for the notion of a $K$-process in
ergodic theory. In fact, in that paper he proved that every non-trivial
factor of an UPE system has positive topological entropy and, shortly after,
he proved that an UPE system is disjoint from every minimal zero entropy
system \cite{Blan93}.
Although an UPE system is topologically weakly mixing but not always
strongly mixing \cite{Blan93}, Glasner and Weiss \cite{EGlaBWei94} proved
that UPE is a necessary condition for a TDS $(X,T)$ to have a $T$-invariant
probability measure $\mu$ of full support whose corresponding measurable
dynamical system $(X,T,\mu)$ is a $K$-process.

In this short note we prove that $(X,T)$ has UPE if and only if so does
$(\cM(X),\wt{T})$. This result can be seen as a version of the Glasner-Weiss
theorem \cite{EGlaBWei95} for the notion of uniformly positive entropy.
In the proof of this result we will use local entropy theory; more precisely, 
we will use the characterization of UPE by means of the notion of an
independence set given by Huang and Ye \cite{HuangYe} (see also Kerr and 
Li \cite{KerrLi07}).
Moreover, we will also use an important technique developed by Glasner and
Weiss \cite{EGlaBWei95} which connects linear operators to combinatorics.

We point out that Huang and Ye \cite{HuangYe} proved the corresponding
result for the hyperspace, i.e., a TDS $(X,T)$ has UPE if and only if
$(\cK(X),\ov{T})$ has UPE.


\section{Preliminaries}

Recall that the Prohorov metric $d_P$ on $\cM(X)$ induces the so-called
{\em weak$\,^*$ topology}, that is, the topology whose basic open
neighborhoods of $\mu \in \cM(X)$ are the sets of the form
$$
\V(\mu;f_1,\ldots,f_k;\eps):= \Big\{\nu \in \cM(X) :
  \Big| \int_X f_i \,d\nu - \int_X f_i \,d\mu \Big| < \eps
  \text{ for } i = 1,\ldots,k\Big\},
$$
where $k \geq 1$, $f_1,\ldots,f_k : X \to \R$ are continuous functions and
$\eps > 0$.
For each $n \in \N$, let
$$
\cM_n(X):= \Big\{\displaystyle\frac{1}{n}\sum_{i=1}^n \delta_{x_i}\in\cM(X) :
                 x_1,\ldots,x_n \in X \ \text{not necessarily distinct}\Big\},
$$
where $\delta_x$ denotes the unit mass concentrated at the point $x$ of $X$.
It is classical that $\bigcup_{n \in \N} \cM_n(X)$ is dense in $\cM(X)$.
Since $\cM_n(X)$ is $\wt{T}$-invariant, we can consider the TDS
$(\cM_n(X),\wt{T})$, where we are also denoting by $\wt{T}$ the
corresponding restricted map.
It will be convenient to denote $\cM(X)$ by $\cM_\infty(X)$.

Let us now recall some definitions and notations from entropy theory.
In what follows, all logarithms are in base $2$. Let $(X,T)$ be a TDS.
Given covers $\cU_1,\ldots,\cU_n$ of $X$, let
$$
\cU_1 \lor \cdots \lor \cU_n := \{U_1 \cap \cdots \cap U_n:
                                  U_1 \in \cU_1,\ldots,U_n \in \cU_n\}.
$$
The {\em entropy} of an open cover $\cU$ of $X$ is defined by
$$
H(\cU):= \log N(\cU),
$$
where $N(\cU)$ denotes the minimum cardinality of a subcover of $\cU$.
The {\em topological entropy of $T$ with respect to $\cU$} is defined by
$$
h_{top}(T,\cU):= \lim_{n \to \infty} \frac{1}{n} H(\cU^{n-1}),
$$
where $\cU^{n-1}:= \cU \lor T^{-1}\cU \lor\dots\lor T^{-(n-1)}\cU$,
and the {\em topological entropy} of $T$ is given by
$$
h_{top}(T):= \sup_{\cU} h_{top}(T,\cU),
$$
where the supremum is taken over all open covers of $X$.
The notion of topological entropy was introduced by Adler, Konheim and
McAndrew \cite{AdlKonMcA65}. It plays a fundamental role in topological
dynamics and its applications. Finally, an open cover $\cU = \{U,V\}$ of $X$
consisting of two sets is called a {\it standard cover} if both $U$ and $V$
are non-dense in $X$. The TDS $(X,T)$ is said to have {\em uniformly positive
entropy} (UPE) if $h_{top}(T,\cU) > 0$ for every standard cover $\cU$ of $X$.
The notion of UPE is due to Blanchard \cite{Blan92}.

The following useful characterization of UPE was given by Huang and Ye
\cite[Theorem~7.4(i)]{HuangYe} (see also Kerr and Li \cite{KerrLi07}).

\medskip
\noindent {\bf Theorem 0.} {\it A TDS $(X,T)$ has UPE if and only if
every pair $(U,V)$ of disjoint nonempty open sets in $X$ has an
independence set of positive density.}

\medskip
Recall that a set $I \subset \N$ is said to be an {\em independence set}
for a tuple $(A_1,A_2,\dots,A_k )$ of subsets of $X$ if
$$
\bigcap_{j \in J} T^{-j}(A_{\sigma(j)}) \neq \emptyset
$$
for every nonempty finite subset $J$ of $I$ and every function
$\sigma : J \to \{1,2,\dots, k\}$. Recall also that a subset $I$ of $\N$
has {\em positive density} if the limit
$$
\lim_{n \to \infty} \frac{|I \cap \{1,\dots,n\}|}{n}
$$
exists and is nonzero, where $|K|$ denotes the cardinality of the subset
$K$ of $\N$.


\section{Proof of the Main Result}

\smallskip
\begin{lemma}\label{newbasis}
The sets of the form
\begin{equation}\label{Equa1}
\W(U_1,\ldots,U_k;\eta_1,\ldots,\eta_k):=
  \{\nu \in \cM(X) : \nu(U_i) > \eta_i \text{ for } i = 1,\ldots,k\},
\end{equation}
where $k \geq 1$, $U_1,\ldots,U_k$ are nonempty disjoint open sets in $X$
and $\eta_1,\ldots,\eta_k$ are positive real numbers with
$\eta_1+\cdots+\eta_k < 1$, form a basis for the weak$\,^*$ topology on
$\cM(X)$.
\end{lemma}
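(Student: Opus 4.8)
The plan is to verify the two defining properties of a basis: that every set $\W(U_1,\ldots,U_k;\eta_1,\ldots,\eta_k)$ is open in the weak$^*$ topology, and that every basic weak$^*$ neighborhood $\V(\mu;f_1,\ldots,f_k;\eps)$ of a point $\mu$ contains a set of the form \eqref{Equa1} that still contains $\mu$ (openness plus this refinement property, together with the fact that each $\mu$ lies in some such set, gives a basis). The openness is the routine half: for an open set $U\subseteq X$ the functional $\nu\mapsto\nu(U)$ is lower semicontinuous on $\cM(X)$, since $\nu(U)=\sup\{\int_X f\,d\nu:\ f\in C(X),\ 0\le f\le 1,\ \supp f\subseteq U\}$ is a supremum of weak$^*$-continuous functionals. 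Hence each $\{\nu:\nu(U_i)>\eta_i\}$ is open and so is their finite intersection.

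For the refinement property, fix $\mu$ and $\V(\mu;f_1,\ldots,f_k;\eps)$, set $M:=1+\max_i\|f_i\|_\infty$, and consider the continuous map $\Phi:=(f_1,\ldots,f_k):X\to(-M,M)^k$ (strict inclusion by the choice of $M$). The idea is to pull back a fine grid of \emph{open} boxes. Cover the cube by a grid of boxes of side length $<\eps/4$; their interiors form finitely many disjoint open boxes $Q_1^\circ,\ldots,Q_P^\circ$, and the union of the interiors omits only the grid hyperplanes. The crucial device is to choose the grid so that $\mu$ ignores these hyperplanes: in each coordinate $i$ the pushforward $(f_i)_*\mu$ has at most countably many atoms, so the finitely many grid levels of that coordinate may be chosen to avoid them, whence $\mu(f_i^{-1}(t))=0$ for every grid level $t$. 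Then $U_l:=\Phi^{-1}(Q_l^\circ)$ are disjoint open subsets of $X$ on each of which every $f_i$ oscillates by less than $\eps/4$, and $\mu\bigl(\bigcup_l U_l\bigr)=1$, since the excluded set $\Phi^{-1}(\text{grid hyperplanes})$ is a finite union of the null sets $f_i^{-1}(t)$. Discarding the indices with $\mu(U_l)=0$ leaves finitely many disjoint nonempty open sets of positive $\mu$-measure with $\sum_l\mu(U_l)=1$. \emph{This construction — producing disjoint open sets of full $\mu$-measure on which the $f_i$ barely oscillate — is the crux}, and pulling back open boxes across $\mu$-null grid hyperplanes is exactly what keeps the sets genuinely open while preserving full measure.

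Finally, choose $\eta_l\in(0,\mu(U_l))$ with $S:=\sum_l\eta_l$ so close to $1$ that $\gamma:=1-S$ satisfies $3M\gamma<\eps/2$; automatically $S<\sum_l\mu(U_l)=1$, so the admissibility condition $\eta_1+\cdots+\eta_m<1$ holds and $\mu$ lies in $\W:=\W(U_1,\ldots;\eta_1,\ldots)$. It remains to check $\W\subseteq\V(\mu;f_1,\ldots,f_k;\eps)$. For $\nu\in\W$, disjointness gives $\sum_l\nu(U_l)\le 1$, so with $a_l:=\nu(U_l)-\eta_l\ge0$ and $b_l:=\mu(U_l)-\eta_l\ge0$ one has $\sum_l a_l\le\gamma$ and $\sum_l b_l=\gamma$, hence $\sum_l|\nu(U_l)-\mu(U_l)|\le 2\gamma$ and $\nu\bigl(X\setminus\bigcup_l U_l\bigr)\le\gamma$. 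Comparing $\int_X f_i\,d\nu$ with $\int_X f_i\,d\mu$ by replacing $f_i$ on each $U_l$ by a constant value $c_{il}$ produces an error $<\tfrac{\eps}{4}$ from the oscillation bound on each of the two measures, an error $\le M\sum_l|\nu(U_l)-\mu(U_l)|\le 2M\gamma$ from comparing the weights, and an error $\le M\gamma$ from the leftover mass of $\nu$ (the leftover mass of $\mu$ being $0$). Summing, $\bigl|\int_X f_i\,d\nu-\int_X f_i\,d\mu\bigr|\le \tfrac{\eps}{2}+3M\gamma<\eps$ for each $i$, so $\nu\in\V$, which completes the verification.
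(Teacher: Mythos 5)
Your proof is correct, but it takes a genuinely different route from the paper's. The paper works entirely with the Prohorov metric, leaning on the stated fact that $d_P$ metrizes the weak$^*$ topology: openness of $\W(U_1,\ldots,U_k;\eta_1,\ldots,\eta_k)$ is proved via inner regularity (compact $C_i\subset U_i$ with $\mu(C_i)>\eta_i$, then $d_P(\nu,\mu)<\delta$ forces $\nu(U_i)\geq\nu(C_i^\delta)\geq\mu(C_i)-\delta>\eta_i$), and the refinement step places a $\W$-set inside an arbitrary Prohorov ball by covering $X$ with finitely many open sets of diameter $<\delta$, disjointifying them via compact subsets to get disjoint open $U_i$ of small diameter with $\sum_i\mu(U_i)>1-\delta/2$, and closing with the one-sided Prohorov estimate $\nu(A^\delta)>\mu(A)-\delta$. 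You instead work intrinsically with the test-function neighborhoods $\V(\mu;f_1,\ldots,f_k;\eps)$: openness comes from lower semicontinuity of $\nu\mapsto\nu(U)$ (a portmanteau-type fact), and your disjoint open sets come not from a small-diameter cover of $X$ but from pulling back a grid of open boxes under $\Phi=(f_1,\ldots,f_k)$, with the atom-avoidance trick ensuring the grid hyperplanes are $\mu$-null so that the open preimages still carry full $\mu$-mass. The two proofs share the same final bookkeeping (thresholds $\eta_l$ just below $\mu(U_l)$ summing to nearly $1$, with disjointness forcing $\nu(U_l)\approx\mu(U_l)$ for every $\nu$ in the $\W$-set), but the trade-off is real: your argument never invokes the Prohorov metric or its equivalence with the weak$^*$ topology, so it is self-contained at the level of the weak$^*$ topology itself, at the cost of the slightly more delicate grid/null-hyperplane construction; the paper's argument is shorter and more geometric, but only because it imports the nontrivial metrization theorem quoted in Section 2.
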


\begin{proof}
Suppose that $\mu \in \W:= \W(U_1,\ldots,U_k;\eta_1,\ldots,\eta_k)$.
By inner regularity, there are compact sets $C_1,\ldots,C_k$ in $X$
such that $C_i \subset U_i$ and $\mu(C_i) > \eta_i$ for all $i$.
Let $\delta > 0$ be so small that $C_i^\delta \subset U_i$ and
$\mu(C_i) - \delta > \eta_i$ for all $i$. Then $d_P(\nu,\mu) < \delta$
implies
$$
\nu(U_i) \geq \nu(C_i^\delta) \geq \mu(C_i) - \delta > \eta_i
  \ \ \text{ for all } i \in \{1,\ldots,k\},
$$
that is, $\nu \in \W$. This proves that $\W$ is open in $\cM(X)$.

Now, take an open ball $B_{d_P}(\mu;\eps)$ in $\cM(X)$ and choose
$\delta \in (0,\eps)$. We claim that there exist disjoint open sets
$U_1,\ldots,U_k$ in $X$ such that
\begin{equation}\label{Equa2}
\diam U_i < \delta \ \ \text{ for all } i \in \{1,\ldots,k\}
\end{equation}
and
\begin{equation}\label{Equa3}
\mu(U_1) + \cdots + \mu(U_k) > 1 - \delta/2.
\end{equation}
Indeed, let $\{V_1,\ldots,V_k\}$ be an open cover of $X$ by sets with
diameters $< \delta$. Let $C_0:= \varnothing$ and let $C_1,\ldots,C_k$ be
compact sets in $X$ such that
$$
C_i \subset V_i \backslash (C_0 \cup \ldots \cup C_{i-1}) \ \text{ and } \ 
\mu\big(V_i \backslash (C_1 \cup \ldots \cup C_i)\big) < \frac{\delta}{2k}
\ \ \ \ (i \in \{1,\ldots,k\}).
$$
Since $C_1,\ldots,C_k$ are disjoint and since (\ref{Equa2}) and (\ref{Equa3})
hold with $C_1,\ldots,C_k$ instead of $U_1,\ldots,U_k$, it is clear that
there exist $U_1,\ldots,U_k$ with the desired properties.
Moreover, we may assume $\mu(U_i) > 0$ for all $i$.
Let $Y:= X \backslash (U_1 \cup \ldots \cup U_k)$.
For each $i$, take $\eta_i > 0$ with
$\mu(U_i) - \frac{\delta}{2k} < \eta_i < \mu(U_i)$.
We shall prove that
$$
\mu \in \W(U_1,\ldots,U_k;\eta_1,\ldots,\eta_k) \subset B_{d_P}(\mu;\eps).
$$
It is clear that $\mu \in \W(U_1,\ldots,U_k;\eta_1,\ldots,\eta_k)$.
Pick $\nu \in \W(U_1,\ldots,U_k;\eta_1,\ldots,\eta_k)$ and $A \in \cB_X$.
Let $I:= \{1 \leq i \leq k : A \cap U_i \neq \varnothing\}$. Since
$\mu(Y) < \delta/2$ and $A^\delta \cap U_i = U_i$ whenever $i \in I$, we get
\begin{align*}
\nu(A^\delta) &\geq \sum_{i \in I} \nu(A^\delta \cap U_i)
               = \sum_{i \in I} \nu(U_i)
               > \sum_{i \in I} \eta_i
               > \sum_{i \in I} \mu(A \cap U_i) - \frac{\delta}{2}\\
              &= \mu(A) - \mu(A \cap Y) - \frac{\delta}{2}
               > \mu(A) - \delta.
\end{align*}
This proves that $d_P(\nu,\mu) \leq \delta < \eps$.
\end{proof}

\begin{lemma}\label{lastbaselemma}
Given nonempty open sets $\U_0,\U_1$ in $\cM_n(X)$, where
$1 \leq n \leq \infty$, there exist $m \in \N$ and nonempty open sets
$U_{0,1},\ldots,U_{0,m},U_{1,1},\ldots,U_{1,m}$ in $X$ such that
\begin{equation}\label{EquaA}
\frac{1}{m} \sum_{i=1}^m \delta_{x_{k,i}} \in \U_k \ \text{ whenever }
  x_{k,1} \in U_{k,1},\ldots,x_{k,m} \in U_{k,m} \ \ \ \ (k \in \{0,1\}).
\end{equation}
Moreover, we can take $m = n$ if $n < \infty$.
\end{lemma}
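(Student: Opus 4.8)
The plan is to reduce everything to the product topology on a finite power of $X$ via the continuity of the ``empirical measure'' map. For each $m \in \N$, consider $\Phi_m : X^m \to \cM(X)$ defined by $\Phi_m(y_1,\ldots,y_m) = \frac{1}{m}\sum_{i=1}^m \delta_{y_i}$, whose image is exactly $\cM_m(X)$. The first thing I would record is that $\Phi_m$ is continuous: if $y_i \to z_i$ in $X$ for each $i$, then $f(y_i) \to f(z_i)$ for every continuous $f : X \to \R$, so $\int_X f \, d\Phi_m(y) \to \int_X f \, d\Phi_m(z)$, which is precisely weak$^*$ convergence. This is routine. Once this is in hand, the whole statement follows from the definition of the product topology.

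For the finite case $n < \infty$ (where we must take $m = n$), fix $k \in \{0,1\}$ and choose any $\mu_k \in \U_k$. Since $\mu_k \in \cM_n(X)$, we may write $\mu_k = \Phi_n(x_{k,1}^0,\ldots,x_{k,n}^0)$ for suitable points of $X$. Because $\U_k$ is open in $\cM_n(X)$ and $\Phi_n$ is continuous, the preimage $\Phi_n^{-1}(\U_k)$ is an open subset of $X^n$ containing $(x_{k,1}^0,\ldots,x_{k,n}^0)$; by the definition of the product topology it contains a basic open box $U_{k,1} \times \cdots \times U_{k,n}$ around this point, with each $U_{k,i}$ open in $X$ and nonempty. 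These are exactly the sets required by \eqref{EquaA}: any $(x_{k,1},\ldots,x_{k,n})$ with $x_{k,i} \in U_{k,i}$ lies in $\Phi_n^{-1}(\U_k)$, so $\frac{1}{n}\sum_{i=1}^n \delta_{x_{k,i}} \in \U_k$.

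For the infinite case $n = \infty$, the one extra ingredient is to arrange a common number of atoms for the two sets. Here I would invoke the classical density of $\bigcup_p \cM_p(X)$ in $\cM(X)$ recorded in the Preliminaries: since each $\U_k$ is a nonempty open subset of $\cM(X)$, it contains an empirical measure, say one lying in $\cM_{p_k}(X)$. Setting $m := p_0 p_1$ and repeating each atom $m/p_k$ times (an integer equal to $p_{1-k}$), I rewrite both chosen measures as uniform averages of exactly $m$ Dirac masses, i.e. $\mu_k = \Phi_m(x_{k,1}^0,\ldots,x_{k,m}^0) \in \U_k$. The same continuity-plus-product-box argument as above, now applied to $\Phi_m : X^m \to \cM(X)$ and to $\U_k$ open in $\cM(X)$, produces the neighborhoods $U_{k,1},\ldots,U_{k,m}$ satisfying \eqref{EquaA}.

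The construction is essentially mechanical once $\Phi_m$ is available, so there is no serious obstacle; the only point requiring a moment's care is the infinite case, where $\U_0$ and $\U_1$ may first be captured by empirical measures with \emph{different} numbers of atoms, and the atom-repetition trick (passing to the common value $m = p_0 p_1$) is what reconciles them into a single $m$ serving both $k = 0$ and $k = 1$, as the statement demands. I note in passing that one could avoid $\Phi_m$ altogether and instead feed a representative $\mu_k$ into the basis from Lemma~\ref{newbasis}, assigning to each atom of $\mu_k$ lying in one of the disjoint sets $U_i$ of that basic neighborhood the set $U_i$ itself as $U_{k,i}$; but the continuity argument handles both values of $n$ uniformly and seems cleanest.
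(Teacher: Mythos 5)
Your proposal is correct, and it reaches the lemma by a genuinely different route than the paper. The paper stays entirely inside measure-theoretic language: it invokes Lemma~\ref{newbasis} to find a basic open set $\W_k = \W(V_{k,1},\ldots,V_{k,j_k};\eta_{k,1},\ldots,\eta_{k,j_k})$ with $\varnothing \neq \W_k \cap \cM_n(X) \subset \U_k$, picks an empirical measure in $\W_k$, assigns to each atom lying in some (necessarily unique, by disjointness) $V_{k,j}$ a small neighborhood contained in that $V_{k,j}$ (and $X$ otherwise), and then verifies by counting atoms that any perturbed empirical measure charges each $V_{k,j}$ with mass $> \eta_{k,j}$, hence stays in $\W_k$; this is precisely the alternative you sketch in your closing aside, so the approach you set aside as less clean is in fact the paper's proof. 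Your main argument instead pulls $\U_k$ back under the empirical-measure map $\Phi_m : X^m \to \cM(X)$, whose continuity is a two-line check in the metrizable setting, and reads the sets $U_{k,1},\ldots,U_{k,m}$ off a basic box of the product topology inside the open preimage; the point that $\U_k$ is only relatively open in $\cM_n(X)$ is harmless, as you note, because the image of $\Phi_n$ is exactly $\cM_n(X)$. Both proofs treat $n=\infty$ identically: density of $\bigcup_{p}\cM_p(X)$ plus the atom-repetition trick reaching the common value $m=p_0p_1$. What your route buys is independence from Lemma~\ref{newbasis} and elimination of the combinatorial verification, turning the lemma into a soft consequence of continuity and the definition of the product topology. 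What the paper's route buys is economy within the article as a whole: the basis of Lemma~\ref{newbasis} is needed anyway (for instance, to see that sets such as $\{\mu \in \cM(X) : \mu(V_0) > 0.9\}$ in the proof of the main theorem are open), so the paper gets this lemma from machinery already on the table without introducing the auxiliary map $\Phi_m$.
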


\begin{proof}
By Lemma~\ref{newbasis}, there is an open set $\W_k$ of the form
$\W(V_{k,1},\ldots,V_{k,j_k};\eta_{k,1},\ldots,\eta_{k,j_k})$ in $\cM(X)$ such that
\begin{equation}\label{EquaB}
\varnothing \neq \W_k \cap \cM_n(X) \subset \U_k \ \ \ (k \in \{0,1\}).
\end{equation}

Assume $n < \infty$ and take
\begin{equation}\label{EquaC}
\mu_k:= \frac{1}{n} \sum_{i=1}^n \delta_{y_{k,i}} \in \W_k
  \ \ \text{ for } k \in \{0,1\}.
\end{equation}
Put $m:= n$. For each $k \in \{0,1\}$ and each $i \in \{1,\ldots,n\}$, 
if $y_{k,i} \in V_{k,j}$ for some (necessarily unique) $j \in \{1,\ldots,j_k\}$, 
let $U_{k,i}$ be an open neighborhood of $y_{k,i}$ with $U_{k,i} \subset V_{k,j}$,
and if $y_{k,i} \not\in V_{k,1} \cup \ldots \cup V_{k,j_k}$, put $U_{k,i}:= X$.
We claim that (\ref{EquaA}) holds. Indeed, take 
$x_{k,1} \in U_{k,1},\ldots,x_{k,n} \in U_{k,n}$. By (\ref{EquaB}), it is enough to
show that
\begin{equation}\label{EquaC2}
\frac{1}{n} \sum_{i=1}^n \delta_{x_{k,i}} \in \W_k.
\end{equation}
Let $r_{k,j}:= \big|\{1 \leq i \leq n : y_{k,i} \in V_{k,j}\}\big|$ and 
$s_{k,j}:= \big|\{1 \leq i \leq n : x_{k,i} \in V_{k,j}\}\big|$.
Since $y_{k,i} \in V_{k,j}$ implies $x_{k,i} \in U_{k,i} \subset V_{k,j}$, we have that
$r_{k,j} \leq s_{k,j}$. Thus, by (\ref{EquaC}), 
$$
\left(\frac{1}{n} \sum_{i=1}^n \delta_{x_{k,i}}\right)\big(V_{k,j}\big) 
= \frac{s_{k,j}}{n} \geq \frac{r_{k,j}}{n} = \mu_k(V_{k,j}) > \eta_{k,j},
$$
for every $1 \leq j \leq j_k$, which proves (\ref{EquaC2}).

Now, assume $n = \infty$. Since $\bigcup_{t \in \N} \cM_t(X)$
is dense in $\cM(X)$, we can take
\begin{equation}\label{EquaD}
\nu_k:= \frac{1}{t_k} \sum_{i=1}^{t_k} \delta_{z_{k,i}} \in \W_k
  \ \ \text{ for } k \in \{0,1\}.
\end{equation}
For each $i \in \{1,\ldots,t_k\}$, let $A_{k,i}$ be an open neighborhood
of $z_{k,i}$ with $A_{k,i} \subset V_{k,j}$ if $z_{k,i} \in V_{k,j}$
(put $A_{k,i}:= X$ if $z_{k,i} \not\in V_{k,1} \cup \ldots \cup V_{k,j_k}$).
Put $m:= t_0 t_1$ and produce a sequence $U_{0,1},\ldots,U_{0,m}$
(resp.\ $U_{1,1},\ldots,U_{1,m}$) by putting each $A_{0,i}$ (resp.\ each
$A_{1,i}$) exactly $t_1$ (resp.\ $t_0$) times. By reasoning as in the previous
paragraph, we obtain (\ref{EquaA}) from (\ref{EquaB}) and (\ref{EquaD}).
\end{proof}

We will also need the following result \cite[Proposition~2.1]{EGlaBWei95}:

\begin{lemma}\label{BS}
Given constants $\eps > 0$ and $b > 0$, there exist constants $m_0 \in \N$
and $c > 0$ such that the following property holds for every $m \geq m_0$:
if $\varphi : \ell_1^k \to \ell_\infty^m$ is a linear map with
$\|\varphi\| \leq 1$, and if $\varphi(B_{\ell_1^k})$ contains more than
$2^{bm}$ points that are $\eps$-separated, then $k \geq 2^{cm}$.
\end{lemma}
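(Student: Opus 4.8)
The plan is to use the only two facts available about $\varphi$: that $\varphi(B_{\ell_1^k})$ is the absolutely convex hull of the $k$ generators $g_i := \varphi(e_i)$, and that $\norm{\varphi}\le 1$ forces every $g_i$ — and hence every point of $\varphi(B_{\ell_1^k})$ — to lie in the unit ball of $\ell_\infty^m$, i.e.\ in the cube $[-1,1]^m$. Writing the hypothesised family as $x^{(1)},\dots,x^{(M)}$ with $M>2^{bm}$, the idea is to show that each $x^{(l)}$ can be encoded by a very short ``word'' in the alphabet $\{\pm g_1,\dots,\pm g_k\}$, and then to count the admissible words.

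Concretely, I would first fix the coding mechanism. Each $x^{(l)}$ is a signed convex combination $\sum_i\theta_i^{(l)}g_i$ with $\sum_i\abs{\theta_i^{(l)}}\le 1$. Reading $(\abs{\theta_i^{(l)}})_i$ as a sub-probability distribution on the signed generators, I would invoke the empirical (Maurey-type) method: drawing $s$ independent samples $w_1,\dots,w_s\in\{\pm g_i\}$ and forming $\frac1s\sum_{t=1}^s w_t$, one checks coordinatewise that $\frac1s\sum_t (w_t)_j$ concentrates around $x^{(l)}_j$. Since $\abs{(w_t)_j}\le 1$, Hoeffding's inequality controls the deviation in a single coordinate, so for a suitable sample size $s$ the average lies within $\eps/3$ of $x^{(l)}$ in the $\ell_\infty^m$-norm with positive probability; in particular such an $s$-term average exists. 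Because the $x^{(l)}$ are $\eps$-separated and each is $(\eps/3)$-approximated by its average, distinct indices $l\neq l'$ cannot produce the same average (otherwise $x^{(l)}$ and $x^{(l')}$ would be within $2\eps/3<\eps$ of one another). Hence the map sending $l$ to the multiset underlying its approximating average is injective.

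The counting step then reads $M\le(2k)^s$, since an $s$-term average is determined by an ordered choice of $s$ elements from the $2k$ signed generators. Combined with $M>2^{bm}$ this gives $bm< s\cdot\log(2k)$, whence $\log(2k)>bm/s$ and therefore $k\ge 2^{cm}$ for a constant $c$ depending only on $b$ and $s$, once $m$ exceeds a threshold $m_0$ absorbing the additive constant. This is precisely how the constants $c$ and $m_0$ of the statement would arise.

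The delicate point — and where the whole argument stands or falls — is the control of the sample size $s$. A naive union bound over the $m$ coordinates makes $s$ grow like $\log m$, which is fatal: it would yield only $k\ge 2^{cm/\log m}$, and after taking logarithms and dividing by $m$ this gives no positive lower bound on the exponential growth rate, so it cannot feed the intended entropy application. To obtain a genuine constant $c>0$ one must instead arrange that the number of generators entering the code is bounded by a quantity depending only on $\eps$ and $b$, not on $m$. Extracting such a uniform bound from the separation hypothesis, and converting the resulting word-count into the clean estimate $k\ge 2^{cm}$, is the heart of the Glasner--Weiss technique and the main obstacle I expect to face.
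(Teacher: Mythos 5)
You have correctly identified the fatal flaw in your own argument, but identifying it does not repair it: as written, the proposal does not prove the lemma. The Maurey/empirical sampling step, which is the engine of your encoding, genuinely requires a union bound over the $m$ coordinates to turn the single-coordinate Hoeffding estimate into an $\ell_\infty^m$-approximation, so the word length $s$ must grow like $\eps^{-2}\log m$. The resulting count $2^{bm} < M \le (2k+1)^s$ then yields only $\log k \gtrsim \eps^2 b\, m/\log m$, i.e.\ $k \ge 2^{cm/\log m}$, which is strictly weaker than the assertion $k \ge 2^{cm}$; and this loss is not an artifact of sloppiness --- for operators $\ell_1^k \to \ell_\infty^m$ the $\log m$ factor in Maurey-type covering estimates is in general unavoidable (this is the content of sharp entropy-number results for such operators). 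So the ``delicate point'' you defer is precisely the theorem: what you have proved is a statement that cannot feed the entropy application, and no choice of constants rescues the scheme without a new idea.

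The missing idea is combinatorial rather than probabilistic. In Glasner--Weiss's own argument (the present paper does not prove this lemma at all --- it quotes it as Proposition~2.1 of \cite{EGlaBWei95}, so there is no in-paper proof to compare with), one quantizes the coordinates of the $\eps$-separated points into levels of width $\approx \eps/2$, views each point as a word of length $m$ over a fixed finite alphabet, and applies a Sauer--Shelah type lemma: since there are exponentially many (in $m$) pairwise ``strongly distinct'' words, there is a subset $S$ of coordinates of size proportional to $m$ on which the family realizes all high/low patterns with a gap $\gtrsim \eps$. One then shows directly that the absolutely convex hull of $k$ points in $\ell_\infty^S$ cannot contain such an $\eps$-separated combinatorial cube of dimension $|S|$ unless $k$ is exponential in $|S|$, which gives $k \ge 2^{cm}$ with $c$ depending only on $\eps$ and $b$. (A modern alternative that also closes your gap is the Mendelson--Vershynin theorem bounding covering numbers by the combinatorial/shattering dimension with no $\log m$ factor; but either way the separation hypothesis must enter through a shattering argument, not merely through the injectivity of your coding map.) As it stands, your proposal is an honest reduction of the lemma to its hard core, not a proof of it.
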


We observe that $\ell_1^k$ denotes the vector space $\R^k$ endowed with
the $\ell_1$-norm, that is,
$\|(r_1,\ldots,r_k)\|:= |r_1| + \cdots + |r_k|$,
and that $\ell_\infty^m$ denotes the vector space $\R^m$ endowed with the
$\ell_\infty$-norm, that is,
$\|(s_1,\ldots,s_m)\|:= \max\{|s_1|,\ldots,|s_m|\}$.
Moreover, $B_{\ell_1^k}$ denotes the closed unit ball of the Banach space
$\ell_1^k$.

\begin{theorem}\label{M(X)Mainresult}
For every TDS $(X,T)$, the following assertions are equivalent:
\begin{itemize}
\item [\rm (i)]   $(X,T)$ has UPE;.
\item [\rm (ii)]  $(\cM_n(X),\wt{T})$ has UPE for some $1 \leq n \leq \infty$;
\item [\rm (iii)] $(\cM_n(X),\wt{T})$ has UPE for every $1 \leq n \leq \infty$.
\end{itemize}                 
\end{theorem}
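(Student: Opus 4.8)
The plan is to prove the cycle (i) $\Rightarrow$ (iii) $\Rightarrow$ (ii) $\Rightarrow$ (i). First I would record that $x \mapsto \delta_x$ is a topological conjugacy between $(X,T)$ and $(\cM_1(X),\wt{T})$, so that (i) is literally the case $n=1$ of (iii); hence (iii) $\Rightarrow$ (ii) is a trivial specialization and it remains to handle (i) $\Rightarrow$ (iii) and (ii) $\Rightarrow$ (i). Throughout I would work only through the Huang--Ye characterization (Theorem~0): to \emph{prove} UPE it suffices to produce, for every pair of disjoint nonempty open sets, an independence set of positive density, and to \emph{use} UPE one feeds a disjoint open pair into Theorem~0; on the induced systems I would always replace a given open set by a basic $\W(\cdots)$ via Lemma~\ref{newbasis}.

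For (i) $\Rightarrow$ (iii), fix $1\le n\le\infty$ and disjoint nonempty open $\U_0,\U_1$ in $\cM_n(X)$. Lemma~\ref{lastbaselemma} yields $m\in\N$ and open sets $U_{k,i}\subset X$ with the property that $x_{k,i}\in U_{k,i}$ for all $i$ forces $\frac1m\sum_i\delta_{x_{k,i}}\in\U_k$. The point of this lifting is that a set $I$ is an independence set for $(\U_0,\U_1)$ as soon as, for the \emph{same} labelling $\sigma\colon J\to\{0,1\}$ and every coordinate $i$, one has $\bigcap_{j\in J}T^{-j}(U_{\sigma(j),i})\neq\varnothing$; equivalently, $I$ must be a common independence set for the $m$ pairs $(U_{0,i},U_{1,i})$. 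Disjointness of $\U_0,\U_1$ forces some coordinate $i_0$ with $U_{0,i_0}\cap U_{1,i_0}=\varnothing$ (otherwise choosing $x_i\in U_{0,i}\cap U_{1,i}$ produces a measure in $\U_0\cap\U_1$), so that coordinate is a genuine disjoint pair to which UPE of $X$ applies. Each remaining coordinate is still an IE-pair (i.e.\ has a positive-density independence set): when $U_{0,i}\cap U_{1,i}\neq\varnothing$ one shrinks this intersection to a disjoint sub-pair and uses UPE to get a positive-density set along which an orbit stays inside both $U_{0,i}$ and $U_{1,i}$, which satisfies the coordinate for every labelling. Assembling these into one positive-density set serving all coordinates under a shared labelling is precisely the statement that a coordinatewise product of IE-pairs is an IE-pair, and I would close the step by invoking the independence-density machinery of Kerr--Li and Huang--Ye.

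For (ii) $\Rightarrow$ (i), suppose $(\cM_n(X),\wt{T})$ has UPE for some $n$ and fix disjoint nonempty open $U,V\subset X$. For $\eta\in(1/2,1)$ the sets $\U=\W(U;\eta)\cap\cM_n(X)$ and $\V=\W(V;\eta)\cap\cM_n(X)$ are disjoint, nonempty and open, hence carry a positive-density independence set $I$; thus on each window $J\subset I$ all $2^{|J|}$ labellings $\Sigma$ are realized by measures $\mu_\Sigma\in\cM_n(X)$ with $\mu_\Sigma(T^{-j}U_{\Sigma(j)})>\eta$. Here the Glasner--Weiss technique enters, via contradiction: if $(U,V)$ had no positive-density independence set in $X$, a Sauer--Shelah type density estimate bounds by $N=2^{o(|J|)}$ the number of distinct itinerary vectors $(\mathbf 1_U(T^j x)-\mathbf 1_V(T^j x))_{j\in J}$, $x\in X$. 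Since each $\mu_\Sigma$ is an average of such vectors, the points $v_\Sigma=(\mu_\Sigma(T^{-j}U)-\mu_\Sigma(T^{-j}V))_{j\in J}$ lie in $\varphi(B_{\ell_1^N})$ for the norm-$\le1$ map $\varphi\colon\ell_1^N\to\ell_\infty^{|J|}$ sending the basis to the itinerary vectors; because $U\cap V=\varnothing$ and $\eta>1/2$, the $2^{|J|}$ points $v_\Sigma$ are $(4\eta-2)$-separated. Lemma~\ref{BS} then forces $N\ge 2^{c|J|}$, contradicting $N=2^{o(|J|)}$ once $|J|$ is large. Hence $(U,V)$ has a positive-density independence set and $X$ has UPE. (For finite $n$ one avoids Lemma~\ref{BS}: taking $\eta>(n-1)/n$ forces \emph{every} atom of $\mu_\Sigma$ into $U_{\Sigma(j)}$ at each time $j$, so a single atom realizes $\Sigma$ and $I$ itself works in $X$.)

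I expect the genuine obstacle to be the matching of scales in these two transfers. In (ii) $\Rightarrow$ (i) the delicate point is quantitative: Lemma~\ref{BS} must be run with the separation $\eps=4\eta-2$ and the density of $I$ held fixed while $|J|\to\infty$, and packaging ``no positive-density independence set'' into the clean count $N=2^{o(|J|)}$ (rather than a mere upper-density statement) requires a compactness/diagonalization over windows so that the exponential lower bound $2^{c|J|}$ genuinely overtakes $2^{o(|J|)}$. In (i) $\Rightarrow$ (iii) the crux is instead the simultaneity --- producing one positive-density set that serves all $m$ coordinate pairs under a common labelling --- which is exactly where the product property of IE-pairs, and hence the independence-density combinatorics, does the real work.
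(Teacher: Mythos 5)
Your proposal is correct in substance and shares the paper's skeleton---the $n=1$ conjugacy $x\mapsto\delta_x$, the atom argument for finite $n$ (taking $\eta>(n-1)/n$ so that every atom is forced into the target set), the lifting of open sets of $\cM_n(X)$ via Lemma~\ref{lastbaselemma}, and the use of Lemma~\ref{BS}---but it closes the two hard steps by different, and heavier, means than the paper. In (i) $\Rightarrow$ (iii) the paper never decomposes into coordinates: disjointness of $\U_0,\U_1$ makes the two product sets $U_{k,1}\times\cdots\times U_{k,m}$ disjoint in $X_m$, the product system $(X_m,T_m)$ has UPE by Glasner's theorem \cite{EGla97}, and one application of Theorem~0 in $X_m$ hands over the single positive-density independence set, which is then pushed forward by $R_m$. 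Your coordinatewise analysis (one disjoint coordinate, the others handled through sub-pairs of the intersection) plus the Kerr--Li product property of IE-pairs is a correct equivalent, but the recombination you rightly identify as the crux is exactly what the citation of \cite{EGla97} already delivers, so the splitting gains nothing; note also that shrinking $U_{0,i}\cap U_{1,i}$ to a disjoint sub-pair presupposes that this open set is not a singleton, which holds because a nontrivial UPE system is weakly mixing and hence has no isolated points. The genuine divergence is in (ii) $\Rightarrow$ (i) for $n=\infty$: the paper argues directly from the definition of UPE, fixing a standard cover $\cU$, taking a minimal subcover of $\cU^{m-1}$ of cardinality $k_m$, and using the measures $\mu_\sigma$ to produce $2^{|J|}>2^{bm}$ points of $\varphi(B_{\ell_1^{k_m}})$ that are $0.5$-separated, so that Lemma~\ref{BS} yields $k_m\ge 2^{cm}$ and hence $h_{top}(T,\cU)\ge c$ outright---no contradiction, no Sauer--Shelah counting, and no passage between window-level independence and positive-density independence sets. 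Your route instead proves the Huang--Ye criterion by contradiction and therefore needs two further nontrivial inputs beyond Lemma~\ref{BS}: a Karpovsky--Milman/Sauer--Shelah estimate to convert ``no positive-density independence set'' into a subexponential itinerary count, and the Kerr--Li equivalence between positive independence density over windows and the existence of an infinite positive-density independence set (your ``compactness/diagonalization over windows''). That equivalence is a real theorem in \cite{KerrLi07} and may be cited, but it is precisely the kind of machinery the paper's direct entropy estimate is designed to avoid. In exchange, your argument is uniform in $n$ (the same itinerary computation treats finite and infinite $n$ at once) and stays entirely inside the independence framework, whereas the paper pays for its economy by running separate arguments for $n=\infty$ and $2\le n<\infty$.
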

	
\begin{proof}
(ii) $\Rightarrow$ (i): Let us consider first the case $n = \infty$.
Suppose that $(\cM(X),\wt{T})$ has UPE. Let $\cU = \{U_0,U_1\}$ be
a standard cover of $X$. We have to prove that $h_{top}(T,\cU) > 0$.
For this purpose, let $V_0$ and $V_1$ be nonempty open sets in $X$ with
$$
V_0 \subset U_0 \backslash \ov{U_1}, \ \ V_1 \subset U_1 \backslash \ov{U_0}
\ \ \text{ and } \ \ \ov{V_0} \cap \ov{V_1} = \varnothing.
$$
Define
$$
\V_0:= \{\mu \in \cM(X) : \mu(V_0) > 0.9\} \ \text{ and } \
\V_1:= \{\mu \in \cM(X) : \mu(V_1) > 0.9\},
$$
which are disjoint nonempty open sets in $\cM(X)$. By Theorem~0, the pair
$(\V_0,\V_1)$ has an independence set $I \subset \N$ of positive density.
Let $b > 0$ and $m_1 \in \N$ be such that
$$
\frac{|I \cap \{1,\ldots,m\}|}{m} > b \ \ \text{ whenever } m \geq m_1.
$$
Put $\eps = 0.5$ and let $m_0 \in \N$ and $c > 0$ be the constants
associated to $\eps$ and $b$ according to Lemma~\ref{BS}.
Fix $m \geq \max\{m_0,m_1\}$ and choose a subcover
$\{A_1,A_2,\ldots,A_{k_m}\}$ of $\cU^{m-1}$ with minimum cardinality.
Let
$$
B_1:= A_1 \ \text{ and } \
B_i:= A_i \backslash (A_1 \cup \ldots \cup A_{i-1}) \
\text{ for } 2 \leq i \leq k_m.
$$
As $\{A_1,A_2,\ldots,A_{k_m}\}$ is minimal, we have that
$B_i \neq \varnothing$ for every $i$. Let
$$
M:= \big[t_{i,j}\big]_{1 \leq i \leq k_m, 0 \leq j \leq m-1}
$$
be a $k_m \times m$ matrix of 0's and 1's such that
$$
B_i \subset U_{t_{i,0}} \cap T^{-1}(U_{t_{i,1}}) \cap T^{-2}(U_{t_{i,2}})
  \cap \ldots \cap T^{-(m-1)}(U_{t_{i,m-1}}),
$$
for all $1 \leq i \leq k_m$. Consider the linear map
$\varphi : \ell_1^{k_m} \to \ell_\infty^m$ given by
$$
\varphi(r_1,\ldots,r_{k_m}):= [r_1 \ \cdots \ r_{k_m}]\, M.
$$
Clearly, $\|\varphi\| \leq 1$. Let $J:= I \cap \{1,\ldots,m\}$.
Since $I$ is an independence set for $(\V_0,\V_1)$, for each 
$\sigma: J \to \{0,1\}$, there exists $\mu_\sigma \in \cM(X)$ such that
$$
\wt{T}^j(\mu_\sigma) \in \V_{\sigma(j)} \ \ \text{ for all } j \in J.
$$
Let $\sigma, \sigma' : J \to \{0,1\}$ be distinct functions and let
$s \in J$ be such that $\sigma(s) \neq \sigma'(s)$.
Let us assume $\sigma(s) = 1$ and $\sigma'(s) = 0$. Then,
$$
\mu_\sigma(T^{-s}(V_1)) > 0.9 \ \ \text{ and } \ \
\mu_{\sigma'}(T^{-s}(V_0)) > 0.9.
$$
Since
$$
T^{-s}(V_1) \subset \bigcup \{B_i : t_{i,s} = 1\} \subset T^{-s}(U_1)
$$
(for the first inclusion use the fact that $T^{-s}(V_1)$ and $T^{-s}(U_0)$
are disjoint), we obtain
$$
\mu\big(T^{-s}(V_1)\big) \leq \sum_{i=1}^{k_m} t_{i,s}\, \mu(B_i)
                         \leq \mu\big(T^{-s}(U_1)\big)
                         \ \ \ \ (\mu \in \cM(X)).
$$                         
Hence, the $s^\text{th}$ coordinates of the points
$$
\varphi\big(\mu_\sigma(B_1),\ldots,\mu_\sigma(B_{k_m})\big)
\ \ \text{ and } \ \
\varphi\big(\mu_{\sigma'}(B_1),\ldots,\mu_{\sigma'}(B_{k_m})\big)
$$
are greater than $0.9$ and smaller than $0.1$, respectively, showing that
these points are $\eps$-separated. Since $|J| > bm$, there
are more than $2^{bm}$ functions $\sigma$. This shows that
$\varphi(B_{\ell_1^{k_m}})$ contains more than $2^{bm}$ points that
are $\eps$-separated. Thus, by Lemma~\ref{BS}, $k_m \geq 2^{cm}$.
This implies that $h_{top}(T,\cU) \geq c > 0$, as desired. 

Now, suppose that $(\cM_n(X),\wt{T})$ has UPE for some $2 \leq n < \infty$.
Given disjoint nonempty open sets $U_0,U_1$ in $X$, consider
$$
\U_0:= \Big\{\mu \in \cM_n(X) : \mu(U_0) > \frac{n-1}{n}\Big\}
\ \text{ and } \
\U_1:= \Big\{\mu \in \cM_n(X) : \mu(U_1) > \frac{n-1}{n}\Big\},
$$
which are disjoint nonempty open sets in $\cM_n(X)$.
Since $(\cM_n(X),\wt{T})$ has UPE, the pair $(\U_0,\U_1)$ has an
independence set $I \subset \N$ of positive density. Hence, given
$J \subset I$ nonempty and finite, and given $\sigma : J \to \{0,1\}$,
we have that
$$
\bigcap_{j \in J} \wt{T}^{-j}(\U_{\sigma(j)}) \neq \varnothing.
$$
If $\mu:= \frac{1}{n} \sum_{i=1}^n \delta_{x_i}$ belongs to the above
intersection, then
$$
\frac{1}{n} \sum_{i=1}^n \delta_{T^j x_i} \in \U_{\sigma(j)}
\ \ \text{ for all } j \in J.
$$
By the definitions of $\U_0$ and $\U_1$, this implies that
$$
\{T^jx_1,\ldots,T^jx_n\} \subset U_{\sigma(j)} \ \ \text{ for all } j \in J.
$$
Thus,
$$
\{x_1,\ldots,x_n\} \subset \bigcap_{j \in J} T^{-j}(U_{\sigma(j)}),
$$
proving that the above intersection is nonempty. Thus, $I$ is an
independence set of positive density for the pair $(U_0,U_1)$.
By Theorem~0, $(X,T)$ has UPE.

\smallskip
\noindent
(i) $\Rightarrow$ (iii): Suppose that $(X,T)$ has UPE.
Fix $1 \leq n \leq \infty$ and let $\U_0,\U_1$ be disjoint nonempty open
sets in $\cM_{n}(X)$. By Lemma~\ref{lastbaselemma}, there exist an integer
$m \geq 1$ and nonempty open sets
$U_{0,1},\ldots,U_{0,m},U_{1,1},\ldots,U_{1,m}$ in $X$ such that
\begin{equation}\label{E1}
R_m(U_{0,1} \times\cdots\times U_{0,m}) \subset \U_0
\ \text{ and } \
R_m(U_{1,1} \times\cdots\times U_{1,m}) \subset \U_1,
\end{equation}
where $R_m(x_1,\ldots,x_m):= \frac{1}{m} \sum_{i=1}^m \delta_{x_i}$.
Moreover, we may assume $m = n$ if $n < \infty$.
Let $X_m:= X \times \cdots \times X$ ($m$ times) and
$T_m:= T \times \cdots \times T$ ($m$ times).
Note that $\wt{T} \circ R_m = R_m \circ T_m$.
Since $\U_0 \cap \U_1 = \varnothing$, (\ref{E1}) implies that
\begin{equation}\label{E2}
(U_{0,1} \times\cdots\times U_{0,m}) \cap (U_{1,1} \times\dots\times U_{1,m})
 = \varnothing.
\end{equation}
Since any finite product of UPE systems is a UPE system \cite{EGla97},
the TDS $(X_m,T_m)$ has UPE. Hence, by (\ref{E2}) and Theorem~0,
there exists $I \subset \N$ of positive density such that
$$
\bigcap_{j \in J} T_m^{-j}(U_{\sigma(j),1} \times\cdots\times U_{\sigma(j),m})
 \neq \varnothing
$$
for every nonempty finite subset $J$ of $I$ and every function
$\sigma : J \to \{0,1\}$. For such a $J$ and such a $\sigma$, we have that
\begin{align*}
\bigcap_{j \in J} \wt{T}^{-j}(\U_{\sigma(j)})
  &\supset \bigcap_{j \in J} \wt{T}^{-j}\big(R_m(U_{\sigma(j),1}
                                   \times\cdots\times U_{\sigma(j),m})\big)\\
  &\supset \bigcap_{j \in J} R_m\big(T_m^{-j}(U_{\sigma(j),1}
                                   \times\cdots\times U_{\sigma(j),m})\big)\\
  &\supset R_m\Big(\bigcap_{j \in J} T_m^{-j}(U_{\sigma(j),1}
                                   \times\dots\times U_{\sigma(j),m})\Big)
   \neq\varnothing.
\end{align*}
This shows that $I$ is an independence set of positive density for the pair
$(\U_0,\U_1)$. Thus, by Theorem~0, $(\cM_n(X),\wt{T})$ has UPE.
\end{proof}

\section{Acknowledgment}
We would like to thank the referee for valuable suggestions which improved the exposition of the article. 

\end{document}